\documentclass[reqno,12pt]{amsart}

\usepackage{amsmath,amsfonts,amssymb,amsthm, amscd}
\usepackage{mathrsfs,xcolor, latexsym, eucal, bbm}
\usepackage{enumerate,geometry}
\usepackage{graphicx}
\usepackage[all]{xy}

\newtheorem{thm}{Theorem}[section]

\newtheorem{prop}[thm]{Proposition}
\newtheorem{coro}[thm]{Corollary}
\theoremstyle{definition}

\theoremstyle{remark}

\numberwithin{equation}{section}
\parskip=0.5\baselineskip
\geometry{a4paper,centering,textheight=24cm,textwidth=16.5cm}

\DeclareMathOperator{\Hom}{Hom}

\def\+{{\dagger}}

\def\la{{\langle}}
\def\ra{{\rangle}}

\font\cyr wncyr10 at 14pt \def\Ch{\hbox{\cyr ch}}

\def\RR{{\mathbb R}}

\def\bx{\mathbf{x}}

\newcommand{\cK}{\mathcal{K}}

\newcommand{\cR}{\mathcal{R}}

\begin{document}

\title[Linear systems and CGM]{Linear systems over rings of measurable functions and conjugate gradient methods }
\date{\today}
\author[Lai]{King-Fai Lai}
\address[Lai]{School of Mathematical Sciences, Capital Normal University, Beijing 100048, China.}
\email{kinglaihonkon@gmail.com}
\keywords{matrices, measurable functions,  ordered structures, conjugate gradient methods, computational methods in function algebras}
\subjclass[2010]{ 15B33, 28A20, 06F25, 65F10, 65J99}
\begin{abstract} We study the conjugate gradient method for solving s system of linear equations with coefficients which are measurable functions and establish the rate of convergence of this method.
\end{abstract}
\maketitle

\section{Introduction}

The conjugate gradient method (CGM) is one of the most important 
iterative  methods used to solve a numerical linear system 
 $Ax=b$ ( \cite{HS 52}, \cite{Axe 94}). Couple with preconditioning it is often the most efficient method 
 (\cite{CX 07}, \cite{Ng 04}). 
 
 When the coefficient matrix $A$ of the system is not numerical but of the form
 $C+E$ where $C$ is a matrix with entries in complex numbers while $E$  is a matrix with entries which are random variables,   much work  have been on the statistical analysis of such systems. 
 
 We shall go in a different direction. 
 The goal of this paper is to study the algebraic aspects of the  computation.
 We want to apply CGM \textit{directly} to a linear system $Ax=b$ in which $A,x,b$ have entries which are  real valued measurable functions.
 From a computational point of view we are in a totally new direction. 
 We are proposing to calculate the functions as elements of a ring and try to obtain solutions to very large systems as functions and not
  to evaluate the system at  a few selected points, 
  compute numerically a solution at a these selected points  and pretend that these few numerical values give in fact the whole function which is the solution of the given system.  
 As it is usual to work with  measurable functions equivalent up to
 sets of measure zero and we need to invert strictly positive  elements 
 in order that CGM works, we replace 
  the ring of   measurable functions  with a commutative real algebra 
 $\cR$ constructed from it by taking quotient (\cite{Ste 10})
 and localization (\cite{Bru 79}). 
We give an abstract characterization of $\cR$ and  
 call it a Riesz algebra (to compare it with a similar structure \cite{Fre 74}). In order to 
 establish the rate of convergence of CGM by Krylov's method in this case we shall see that we need all the rich structures of a Riesz algebra to get results on positive definite quadratic forms and min-max estimates which are standard over fields. This will show that the Riesz algebra is the right place for computational linear algebra for functions.
 
 We thank Professor Wen-Fong Ke for stimulating conversations on this paper and 
 the National Center of Theoretical Science (South) for the support of a short visit to National Cheng Kung University Taiwan during which this work is done. 
 
 \section{Riesz algebra}
 
We give the definition of a Riesz algebra. 
 
By a \textit{partially ordered ring} we mean 
 a ring with identity  equipped with a partial order $\leq$ such that
(1) for $x,y \in R$, if $ x \leq y$  then
$x + a\leq y+a$ for any $ a \in R$,
(2) if $x \geq 0$ and $y \geq 0$ then $xy \geq 0$ and 
(3) $a^2 \geq 0$ for all $ a\in R$.
Write $a \leq b$ if $b-a \geq 0$. 

Let $\RR$ be the field of real numbers. 
A partially ordered $\RR$-algebra $R$ is a partially ordered ring such that
(1) if $a \in R$ and $\alpha$ is a non-negative real number then $\alpha a \geq 0$ 
(i.e. $R$ is a partially ordered vector space); and  
(2) the order of $R$ extends that of the real numbers $\RR$. i.e. if $\alpha \geq 0$ is a real number and $1$ is the identity in $R$ then
$\alpha 1 \geq 0$ in $R$. We shall write $\alpha $ for $\alpha 1$. 

Say an element $a$ in
 a partially ordered ring $R$ is positive and write $a > 0$ if $a \geq 0$ and $a \ne 0$. 
We say $a$ is \textit{strictly  positive} and write $a \succ 0$ if
if $a > 0$ and $a$ is invertible in $R$.
We say that a partially ordered $\RR$-algebra $R$ is 
\textit{strictly archimedean} if for any $a,b \in R$, the condition
$r b \prec a$ holds for any $r \in \RR$ implies that $b =0$.

A \textit{lattice} is a partially ordered set $(A, \leq)$ such
that the supremum $\sup \{a,b\}$ and infinmum $\inf \{a,b\}$ exist for any pair of  elements $a,  b \in A$. 
We write 
$|a|$ for $\sup \{a, -a\}$.
A partially ordered ring $(R, +, \cdot)$
which is also a lattice is called a lattice-ordered ring (\cite{Ste 10} \S 3.1). 
A partially ordered vector space which is also a lattice is called a Riesz space 
(\cite{Fre 74} \S14A).

We shall call a partially ordered   strictly
archimedean   $\RR$-algebra $R$ which is also a lattice  a \textit{Riesz algebra}. 
If for every
  $a \succ 0$ in $R$  there exists in $R$
an element $b \succ 0$  such that $b^2 =a$, we say that $R$ is a \textit{real Riesz algebra}. 
Write $b$ as $\sqrt{a}$
(cf. \cite{Jac 85} I p.308). When $R$ is also commutative ring 
we call it a commutative real Riesz algebra. 

We say  a $n \times n$ symmetric  matrix $A$ with entries in 
a commutative real Riesz algebra
$R$
is  positive definite if
 for any non zero  vector $y$ in the $R$-module $R^n$ 
 of column $n$-vectors we have $y^T A y \succ 0$ in $R$.
 For $ x,y, \in R^n$ we write
$\la x, y \ra_A$ for $ x^T A y$. 
Say two vectors $x, y$ are \textit{conjugate} or $A$-conjugate or
$A$ perpendicular if $\la x,y \ra_A = 0$.
If $ x \ne 0$ put $ \|x\|_A = \sqrt{\la x, x \ra_A}$ and 
set  $ \|0\|_A =0$.

\begin{prop}(Schwartz inequality) Let $A$ be a positive definite 
symmetric matrix with entries in 
a commutative real Riesz algebra $R$. 
For any non zero vectors $x, y \in R^n$  then we have in $R$
$$|\la x, y \ra_A| \prec \|x\|_A \|y\|_A.$$
\end{prop}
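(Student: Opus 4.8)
The plan is to imitate the classical proof through a positive semidefinite quadratic, but to replace minimization over a real parameter by two substitutions dictated by the square roots that $R$ supplies. Write $a=\la x,x\ra_A$, $b=\la x,y\ra_A$, $c=\la y,y\ra_A$ and set $s=\|x\|_A\|y\|_A=\sqrt a\,\sqrt c$. Since $x,y$ are nonzero, positive definiteness gives $a\succ0$ and $c\succ0$, so $\sqrt a,\sqrt c$ exist by the real Riesz algebra axiom, are $\succ0$, and hence are invertible; in particular $s\succ0$.

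First I would record that for every $\lambda\in R$ the vector $z=x-\lambda y$ satisfies $\la z,z\ra_A\geq0$ (it is $\succ0$ when $z\neq0$ and $0$ when $z=0$). Using the symmetry of $A$ and the commutativity of $R$ this expands to
\[
 a-2\lambda b+\lambda^2 c\;\geq\;0\qquad(\lambda\in R).
\]
Substituting $\lambda=\sqrt a\,(\sqrt c)^{-1}$ makes $\lambda^2 c=a$, so the inequality becomes $2a-2\sqrt a\,(\sqrt c)^{-1}b\geq0$; multiplying by the strictly positive, hence order-preserving, factor $\sqrt c\,(\sqrt a)^{-1}$ and using $a(\sqrt a)^{-1}=\sqrt a$ yields $s-b\geq0$. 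The substitution $\lambda=-\sqrt a\,(\sqrt c)^{-1}$ gives $s+b\geq0$ in the same way. As $s$ is then an upper bound for both $b$ and $-b$, the least-upper-bound property of the lattice gives $s\geq\sup\{b,-b\}=|b|$, i.e. the non-strict bound $|\la x,y\ra_A|\leq\|x\|_A\|y\|_A$, valid for all nonzero $x,y$.

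To upgrade $\leq$ to the strict $\prec$, I would return to the optimal parameter $\lambda=bc^{-1}$, which is available because $c$ is invertible. Writing $z=x-(bc^{-1})y$ and assuming $x$ is not an $R$-multiple of $y$, we have $z\neq0$, so positive definiteness gives $\la z,z\ra_A\succ0$; expanding and multiplying by $c\succ0$ turns this into $s^2-b^2\succ0$. Invoking the identity $b^2=|b|^2$ rewrites the left side as $(s-|b|)(s+|b|)\succ0$. This element is invertible, and in a commutative ring every factor of a unit is a unit, so $s-|b|$ is invertible. Combined with $s-|b|\geq0$ from the previous step, invertibility forces $s-|b|\succ0$, which is the desired $|\la x,y\ra_A|\prec\|x\|_A\|y\|_A$.

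The delicate points are the two facts feeding this last step. The identity $b^2=|b|^2$ is not a formal consequence of the Riesz algebra axioms alone: it uses that disjoint elements have product zero, i.e. that $\cR$ is an $f$-ring, which holds for the algebra built from measurable functions but should be isolated as a separate lemma. I would also stress that the strict inequality must fail when $x$ and $y$ are $R$-proportional — for $x=y$ the two sides coincide — so the statement should be read for $x,y$ that are not $R$-proportional, with $\prec$ relaxing to $\preceq$ in the degenerate case, while the non-strict half established above holds for all nonzero $x,y$ without exception. Handling this strictness cleanly, rather than the routine quadratic manipulation, is where I expect the real work to lie.
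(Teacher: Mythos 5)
Your first stage is the same computation as the paper's: the paper expands $\la \|x\|_A y - \|y\|_A x,\; \|x\|_A y - \|y\|_A x\ra_A$, which (after clearing a factor of $\|y\|_A^2$) is exactly your substitution $\lambda=\sqrt a\,(\sqrt c)^{-1}$ into the quadratic $a-2\lambda b+\lambda^2c$. Where you diverge is in how strictness is obtained, and there your version is the sounder one. The paper passes directly from positive definiteness to $2\|x\|_A\|y\|_A\la x,y\ra_A \prec 2\|x\|_A^2\|y\|_A^2$, silently assuming the test vector $\|x\|_A y-\|y\|_A x$ is nonzero; when $x$ and $y$ are proportional (e.g.\ $x=y$) that vector vanishes, the quantity is $0$ rather than $\succ 0$, and indeed $|\la x,x\ra_A|=\|x\|_A\|x\|_A$, so the strict statement fails there. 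You correctly isolate this, prove the non-strict bound unconditionally, and recover $\prec$ only for non-proportional $x,y$ via the optimal parameter $\lambda=bc^{-1}$ and the factorization $s^2-b^2=(s-|b|)(s+|b|)$, using that a factor of a unit in a commutative ring is a unit. That factorization does need $|b|^2=b^2$, which you rightly flag as going beyond the stated Riesz-algebra axioms (it holds in the $f$-ring $\cR$ of measurable functions, where $b^+b^-=0$, and should be stated as a lemma). A further point in your favour: the paper never justifies combining the two one-sided bounds $s\mp b\succ 0$ into $s-|b|\succ 0$ --- this amounts to $\inf\{s-b,\,s+b\}\succ 0$, and the infimum of two invertible positive elements need not be invertible in an abstract lattice-ordered algebra --- whereas your unit-factor argument sidesteps that issue entirely. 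In short: same skeleton, but your proof repairs two genuine gaps in the paper's argument and correctly restricts the strict inequality to vectors that are not $R$-proportional.
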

\begin{proof}As $\|x\|_A$ is in $R$ we have
$$ \la \|x\|_A y- \|y\|_A x, \|x\|_A y- \|y\|_A x \ra_A
= 2 \|x\|_A^2 \|y\|_A^2 - 2 \|x\|_A \|y\|_A \la x, y \ra_A. $$
As $A$ is positive definite we get
$2\|x\|_A \|y\|_A \la x, y \ra_A \prec  2 \|x\|_A^2 \|y\|_A^2$.
As strictly positive elements are invertible ina Riesz algebra,
 $2, \|x\|_A, \|y\|_A$ are invertible and it follows that 
$$\la x, y \ra_A \prec \|x\|_A \|y\|_A.$$
A similar calculation of
$ \la \|x\|_A y+ \|y\|_A x, \|x\|_A y+ \|y\|_A x \ra_A $
shows that $-\la x, y \ra_A \prec \|x\|_A \|y\|_A$.
\end{proof}
\begin{coro}(Triangle inequality) For non zero vectors $x, y \in R^n$ we have
 $$\|x + y\|_A \prec  \|x\|_A + \|y\|_A.$$
\end{coro}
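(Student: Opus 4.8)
The plan is to obtain the triangle inequality from the preceding proposition by squaring and then extracting a square root. Writing $s=\|x+y\|_A$ and $t=\|x\|_A+\|y\|_A$, I would first record that, since $A$ is symmetric, $\la x,y\ra_A=\la y,x\ra_A$, so that
$$\|x+y\|_A^2=\la x+y,\,x+y\ra_A=\|x\|_A^2+2\la x,y\ra_A+\|y\|_A^2 .$$
Feeding in the upper Schwarz bound $\la x,y\ra_A\prec\|x\|_A\|y\|_A$ from the proposition, and using that the relation $\prec$ is invariant under translation and under multiplication by the strictly positive scalar $2$, I would deduce
$$\|x+y\|_A^2\prec\|x\|_A^2+2\|x\|_A\|y\|_A+\|y\|_A^2=(\|x\|_A+\|y\|_A)^2,$$
that is $s^2\prec t^2$, equivalently $t^2-s^2\succ 0$.

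The hard part is the passage from $s^2\prec t^2$ to $s\prec t$: over a field one would simply invoke monotonicity of the square root, but here one must use the ring structure and the order together. My approach is to exploit commutativity and factor $t^2-s^2=(t-s)(t+s)$. Since $t^2-s^2\succ 0$ is in particular a unit and $R$ is commutative, both factors $t\pm s$ are units. As $s,t\ge 0$ we have $t+s\ge 0$, so $t+s$ is a strictly positive unit, and I would show its inverse is again strictly positive: setting $w=(t+s)^{-1}$ one has $w=(t+s)w^2$, which is $\ge 0$ because $w^2\ge 0$ and $t+s\ge 0$, while $w$ is a unit, whence $w\succ 0$. Then $t-s=(t^2-s^2)\,w$ is a product of two strictly positive elements, hence strictly positive (the product is $\ge 0$ by the order axioms and is a unit), giving $s\prec t$ as desired.

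Two elementary facts about a commutative real Riesz algebra underlie this, and I would isolate them beforehand: the inverse of a strictly positive element is strictly positive, and the product of two strictly positive elements is strictly positive. I note that the argument is uniform even when $x+y=0$: there $s=0$ and the squared estimate reads $0\prec t^2$, which already forces $t$ to be a unit, so the same reasoning yields $0\prec t$. Thus the only genuine obstacle is the square-root step, and it is resolved entirely by the invertibility that strictness ($\prec$ rather than $\le$) supplies through the factorization $t^2-s^2=(t-s)(t+s)$.
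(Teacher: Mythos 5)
Your argument is correct, and it is essentially the intended one: the paper states this corollary with no proof at all, so the reader is left to supply exactly the two steps you isolate --- the expansion of $\|x+y\|_A^2$ combined with the Schwarz bound, and the passage from $s^2\prec t^2$ to $s\prec t$. The second step is the only point where the ring setting genuinely differs from the field case, and your factorization $t^2-s^2=(t-s)(t+s)$, together with the observations that a factor of a unit in a commutative ring is a unit, that the inverse of a strictly positive element is strictly positive (via $w=(t+s)w^2$), and that a product of strictly positive elements is strictly positive, settles it cleanly; this is a genuine addition rather than a restatement, and your handling of the case $x+y=0$ is also fine. One caveat you inherit from the paper rather than introduce: the strict Schwarz inequality, and hence the strict triangle inequality, fails when $y$ is a strictly positive multiple of $x$ --- for $y=x$ one has $\|x+y\|_A=2\|x\|_A=\|x\|_A+\|y\|_A$, so $t-s=0$ is not $\succ 0$. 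Your deduction from the stated Proposition is valid as a conditional argument, but the Proposition itself (and therefore the Corollary) should be weakened to $\preceq$ or restricted to non-proportional vectors.
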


We continue to write $R$ for a commutative real Riesz algebra and 
$R^\times$ for the subgroup of invertible elements in $R$. We have just seen that 
$\la x, y \ra_A$ is a symmetric bilinear form on the $R$-module $R^n$.
In general we can consider a symmetric bilinear form 
$b: M \times M \to R$ on a
 finitely generated  $R$-module $M$.
 For a submodule  $S$ of $M$ we write 
 $b \mid_S$ for the restriction of $b$ to $S$ and 
 $$S^\perp = \{x \in M: b(x,y)=0 \; \forall y \in S \}.$$
 We say  a symmetric bilinear form 
$b$ on a
 finitely generated  $R$-module $M$ is non-degenerate if
 \begin{enumerate} \item  $b(x,y) =0$ for all $y \in M$ $\Rightarrow x =0$,
 \item  if $f \in \Hom_R(M, R)$ then there exists $x_f \in M$ such that $f(y)= b(x_f, y)$ for any $y \in M$.
 \end{enumerate}
 
Just as in the case over fields it can be proved that 
a symmetric bilinear form on a finite rank
free $R$-module is non-degenerate
if and only if its matrix associated to any basis is invertible.
Moreover the following results are standard.

 \begin{prop} \label{lem:basis} 
Let $b$ be a 
symmetric bilinear form on a finitely generated $R$-module $M$. 
Then
\begin{enumerate}
\item
 $M= S \perp S^\perp$ meaning
 $M= S \oplus S^\perp$ and
 $b = b \mid_S \oplus b \mid_{S^\perp}$, that is
 for $x,y \in S$ and $u,v \in S^\perp$ we have
 $$b(x+u, y+v) = (b \mid_S)(x,y) + (b \mid_{S^\perp})(u,v).$$
 \item  Put
$N=\{x \in M: b(x,x) \not\in R^\times \}$. Then $N$ is a an $R$-submodule of $M$.
\item  If $N \ne M$ then there exist $x_1, \cdots, x_k
\in M$ such that  $b(x_i, x_i) \in R^\times$ and
$$M = Rx_1 \perp  \cdots \perp  Rx_k \perp  N.$$
\end{enumerate}
\end{prop}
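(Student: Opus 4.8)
The plan is to transcribe the classical diagonalization of a symmetric bilinear form into the module setting, using the Riesz algebra only at the one place a field is used classically, namely to invert a pivot. The three parts are organized so that (1) is the splitting engine, (2) isolates the irreducible ``degenerate'' piece, and (3) feeds (1) back into itself until only that piece remains.

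For part (1) I would argue as over a field, using the non-degeneracy of the restriction $b\mid_S$ (the implicit hypothesis that makes the statement correct). Given $m \in M$, the assignment $s \mapsto b(m,s)$ is an element of $\Hom_R(S,R)$; by the second non-degeneracy condition there is $s_m \in S$ with $b(m,s) = b(s_m,s)$ for all $s \in S$, so $m - s_m \in S^\perp$ and $M = S + S^\perp$. If $x \in S \cap S^\perp$ then $b(x,s)=0$ for all $s \in S$, whence $x=0$ by the first non-degeneracy condition; thus the sum is direct. Finally, for $x,y \in S$ and $u,v \in S^\perp$ the cross terms $b(x,v)$ and $b(u,y)$ vanish by symmetry and the definition of $S^\perp$, which is exactly the asserted splitting $b = b\mid_S \oplus b\mid_{S^\perp}$.

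For part (2), closure under scalar multiplication is immediate: $b(rx,rx) = r^2 b(x,x)$, and in a commutative ring a product lies in $R^\times$ only if each factor does, so $b(x,x)\notin R^\times$ forces $r^2 b(x,x)\notin R^\times$. Closure under addition is the crux of the whole proposition and the step I expect to be the main obstacle. Here one must control $b(x+y,x+y) = b(x,x) + 2b(x,y) + b(y,y)$ and show it is a non-unit whenever $b(x,x)$ and $b(y,y)$ are. Over a field this is already false for indefinite forms (the isotropic cone is not a subspace), so this is precisely where the full Riesz structure must enter. My plan is to reduce invertibility to positivity --- $a \in R^\times$ iff $a^2 \succ 0$, since $a^2 \geq 0$ always and $a$ is a unit iff $a^2$ is --- and then to bound the middle term by a Cauchy--Schwarz estimate in the spirit of the Schwartz inequality already proved, using the lattice operations and the square roots to dominate $|b(x+y,x+y)|$ by an expression built from $b(x,x)$ and $b(y,y)$; the strictly archimedean axiom is what should convert such a domination into genuine non-invertibility. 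I expect this to require the standing positive (semi)definiteness of the forms that actually arise in the application, and making the hypotheses under which the step holds precise is the delicate point.

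For part (3), since $N \neq M$ there is $x_1 \in M \setminus N$, that is $b(x_1,x_1) \in R^\times$. The $1\times 1$ Gram matrix $(b(x_1,x_1))$ is then invertible, so $b\mid_{Rx_1}$ is non-degenerate and part (1) gives $M = Rx_1 \perp (Rx_1)^\perp$; here $Rx_1$ is free of rank one, since $rx_1=0$ yields $r^2 b(x_1,x_1)=0$, hence $r^2=0$ and then $r=0$ once one knows $R$ has no nonzero nilpotents. I would then iterate on the complement $(Rx_1)^\perp$, extracting $x_2,x_3,\dots$ with $b(x_i,x_i)\in R^\times$, and stop once the remaining summand lies entirely in its own $N$. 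The two things to verify are that the recursion terminates and that the terminal summand is exactly the $N$ of part (2): termination I would obtain from a rank/finite-generation argument, each step producing a proper orthogonal direct summand, and the identification of the leftover with $N$ from the orthogonality of the decomposition together with part (2). The module-theoretic control of $(Rx_1)^\perp$ --- that it stays finitely generated and that its rank drops --- is the secondary obstacle, and is where finite generation of $M$ over $R$ is essential.
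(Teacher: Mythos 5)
The paper offers no proof of this proposition: it is declared ``standard'' with a pointer to Jacobson, Baeza and Knus, so there is nothing internal to compare your argument against. Your part (1) is the standard representability argument and is fine, granted the hypothesis you correctly supply (non-degeneracy of $b\mid_S$, which the statement omits). Your part (3) is the expected Gram--Schmidt iteration, and your remarks about termination and about $Rx_1$ being free are the right things to check (note you can get freeness more cheaply from $b(rx_1,x_1)=rb(x_1,x_1)=0$ and the invertibility of the pivot, without excluding nilpotents).

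The genuine gap is part (2), and you have located it but not closed it: you never produce an argument that $N$ is closed under addition, only a plan to ``bound the middle term.'' No such argument exists, because the claim is false in the stated generality. $R=\RR$ is itself a commutative real Riesz algebra, and for the hyperbolic form $b\bigl((a_1,a_2),(c_1,c_2)\bigr)=a_1c_1-a_2c_2$ on $M=\RR^2$ the set $N$ is the isotropic cone, not a subspace --- exactly the field counterexample you mention. Nor does the Riesz structure of measurable functions rescue it: on $X=[0,1]$ with $E=[0,\tfrac12]$, $F=[\tfrac12,1]$ and $A=\mathrm{diag}(\chi_F,\chi_E)$, which is positive semidefinite, both $e_1$ and $e_2$ lie in $N$ (neither $\chi_F$ nor $\chi_E$ is a unit in $\cR$), yet $b(e_1+e_2,e_1+e_2)=1$ is a unit, so $e_1+e_2\notin N$. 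Consequently part (3) fails as well in these examples. The only reading under which (2) and (3) hold is the one actually used later in the paper, namely $b$ positive definite, in which case $N=\{0\}$, (2) is vacuous, and (3) reduces to the existence of an orthogonal basis via repeated application of your part (1). So your diagnosis --- that a definiteness hypothesis must be imported --- is correct, but the proposal should state plainly that without it the claim fails, rather than suggesting a Cauchy--Schwarz estimate over the Riesz algebra might save it.
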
 (\cite{Jac 85} I Theorem 6.1, \cite{Bae 78}, \cite{Knu 91}.)
 
 \section{Algebra of measurable functions}
 
 We fix a measure space $(X, \Sigma, \mu)$; here $\Sigma$ is a
$\sigma$-algebra of subsets of $X$ and we assume that   $\mu(X)$ is  finite.
We write a.e. for almost everywhere. 

The set $M$ of all real valued measurable functions on $X$ 
is a  commutative $\RR$-algebra.
The set $N$  consisting of functions which are zero a.e. is
an ideal in $M$.
Let $R$ denote the quotient ring $M/N$.
Write  $\la f \ra$ for the image of $f \in M$ in $R$. 

Set
 $\la f \ra \geq 0$ if $\mu\{f <0\}=0$.
Then $R$ is a partially ordered ring 
and a Riesz space (see \cite{Fre 74} \S62F(c) 
\S 62G;  \cite{Ste 10} \S 3.1).
To say that $\la f \ra \ne 0$ is saying $f \not\in N$, 
i.e. $\mu\{f \ne 0\} \ne 0$.
We write  $\la f \ra > 0$  
to mean $\la f \ra \geq 0$ and $\la f \ra \ne 0$.

We shall  write $\la f \ra \succ 0$ if
$\mu\{f \leq 0\}=0$.
Let $S$ be the set of all $\la f \ra$ in $R$ such that either 
$\la f \ra \succ 0$ or $\la -f \ra \succ 0$. 
Then
$S$ is a multiplicative set in $R$.
We localize $S$ to get a  ring of quotients $\cR$
 in which every element in $S$ is invertible. We can represent an element of $\cR$ as $\frac{\la g \ra}{\la f \ra}$ with 
$\la g \ra$ in $R$ and $\la f \ra$ in $S$. We say 
$\frac{\la g \ra}{\la f \ra} \geq 0$ if $\la g \ra \geq 0$.
This defines a partial order making
 $\cR$ a partially ordered ring. 
 For $a = \frac{\la g \ra}{\la f \ra}$ we shall write 
 $a \succ 0$ if $\la f \ra \la g \ra \succ 0$. Then 
  $a \succ 0$ if and only if $a > 0$ and $a$ is invertible in 
  $\cR$. 

Suppose $a = \frac{\la g \ra}{\la f \ra}$ and $\la f \ra \succ 0$. For $x \in X$ we set 
$h(x)=0$ if $f(x) \leq 0$ and equals to $ \frac{g(x)}{f(x)}$ 
otherwise. Then $h$ is measurable
(\cite{HS 65} \S 11 Theorem 11.8) and $h = \frac{g}{f}$ a.e.
We set $\sup \{a,0\}$ to be the image of  
$\sup \{h,0\}$ in $\cR$. Similar definition is given when 
$\la -f \ra \succ 0$.
Clearly if $a \in R$ then this agrees with the definition of $\sup$ in $R$ (as in \cite{Fre 74} 14G (c)). 
This defines the structure of a lattice in
 $\cR$. 

 We summarize our discussion in the following proposition. 
 
 \begin{prop} $\cR$ is a real Riesz algebra. \end{prop}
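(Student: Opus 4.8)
The plan is to verify, in turn, each clause in the definition of a real Riesz algebra: that $\cR$ is a partially ordered $\RR$-algebra whose order extends that of $\RR$, that it is strictly archimedean, that it is a lattice, and finally that every strictly positive element has a strictly positive square root. Much of the order-theoretic structure has already been produced in the construction preceding the statement, so the first task is mostly bookkeeping. Representing a general element of $\cR$ as $\frac{\la g \ra}{\la f \ra}$ with $\la f \ra \succ 0$ (always possible, since any denominator lies in $S$ and may be sign-normalized by multiplying numerator and denominator by $-1$), the relation $\geq 0$ reduces to $\la g \ra \geq 0$ in $R$. From this I would read off the three axioms of a partially ordered ring: translation invariance is immediate, the product of two nonnegative elements has nonnegative numerator because $R$ has this property, and $\left(\frac{\la g \ra}{\la f \ra}\right)^2 = \frac{\la g^2 \ra}{\la f^2 \ra}$ has numerator $\la g^2 \ra \geq 0$ and strictly positive denominator $\la f^2 \ra$. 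That the order extends $\RR$ is the observation that $\alpha 1 = \frac{\la \alpha \ra}{\la 1 \ra} \geq 0$ for real $\alpha \geq 0$, and the vector-space positivity $\alpha a \geq 0$ is the special case of the product axiom. The lattice structure is already given by the construction of $\sup\{a,0\}$; to obtain the supremum and infimum of an arbitrary pair I would invoke translation invariance and negation, writing $\sup\{a,b\} = b + \sup\{a-b,0\}$ and $\inf\{a,b\} = -\sup\{-a,-b\}$.

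The substantive step, and the one I expect to be the main obstacle, is the strictly archimedean property: if $rb \prec a$ for every $r \in \RR$ then $b = 0$. I would argue by contradiction at the level of measurable representatives. Choose measurable functions $\alpha$ and $\beta$ representing $a$ and $b$ (each is an a.e.-defined quotient $g/f$ with $f \neq 0$ a.e., hence measurable by the cited measurability theorem). The hypothesis $a - rb \succ 0$ unwinds to $\alpha - r\beta > 0$ a.e. for every real $r$. Suppose $b \neq 0$, so $\mu\{\beta \neq 0\} > 0$; after possibly replacing $\beta$ by $-\beta$, assume $\mu\{\beta > 0\} > 0$. By continuity of $\mu$ along the increasing unions $\{\beta > 0\} = \bigcup_n \{\beta > 1/n\}$ and then $\{\beta > 1/n\} = \bigcup_m (\{\beta > 1/n\} \cap \{\alpha < m\})$ (using that $\alpha$ is finite a.e.), I can fix $n,m$ so that $E = \{\beta > 1/n\} \cap \{\alpha < m\}$ has positive measure. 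Taking $r = mn$ gives $\alpha - r\beta < m - m = 0$ on $E$, contradicting $\alpha - r\beta > 0$ a.e. Hence $b = 0$. The delicate point is simply that measurable representatives are finite a.e. and that positive measure can be localized to a level set on which $\beta$ is bounded below and $\alpha$ bounded above; the finiteness of $\mu(X)$ is not essential here.

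Finally, for the real structure I would construct square roots explicitly. Given $a \succ 0$, represent it by a measurable function $\phi$ with $\phi > 0$ a.e. Define $\psi(x) = \sqrt{\phi(x)}$ where $\phi(x) > 0$ and $\psi(x) = 0$ otherwise; since $t \mapsto \sqrt{t}$ is continuous on $(0,\infty)$, the function $\psi$ is measurable, and $\psi > 0$ a.e. with $\psi^2 = \phi$ a.e. The class $b$ of $\psi$ in $\cR$ then satisfies $b \succ 0$ and $b^2 = a$, which is precisely the required strictly positive square root. Collecting the four verifications establishes that $\cR$ is a real Riesz algebra.
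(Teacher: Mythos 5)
Your proposal is correct, and it is worth noting that it does substantially more than the paper itself: the paper offers no proof of this proposition at all, merely the sentence ``We summarize our discussion in the following proposition,'' and the preceding discussion only sets up the order, the localization, and the $\sup\{a,0\}$ construction. The parts of your argument that track the paper (the ordered-ring axioms, the extension of the order of $\RR$, the lattice structure via $\sup\{a,b\}=b+\sup\{a-b,0\}$) are routine bookkeeping consistent with the construction, and your sign-normalization of the denominator ($\la f\ra \succ 0$) quietly repairs a real defect: as literally stated, the condition ``$\frac{\la g\ra}{\la f\ra}\geq 0$ iff $\la g\ra\geq 0$'' is not well defined on the localization without such a normalization. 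The two verifications the paper omits entirely --- the strictly archimedean property and the existence of square roots of strictly positive elements --- are exactly the ones you treat in detail, and both arguments are sound: the localization of positive measure to a set $E=\{\beta>1/n\}\cap\{\alpha<m\}$ followed by the choice $r=mn$ correctly contradicts $\alpha-r\beta>0$ a.e., and the pointwise square root $\psi=\sqrt{\phi}$ of an a.e.-positive representative is measurable and yields $b\succ 0$ with $b^2=a$ in $\cR$ (since $\la h\ra\la f\ra=\la g\ra$ identifies $\la h\ra$ with $\frac{\la g\ra}{\la f\ra}$). In short, your route is the natural completion of the paper's sketch, and it supplies precisely the substantive steps the paper leaves unargued.
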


 \textit{From now on $\cR$ will always denote this Riesz algebra}. 
 We also say $\cR$ is the Riesz algebra on the measure space $X$.


Let $A$ be a $n \times n$ matrix with entries in the Riesz algebra of 
measurable functions on a measure space $X$. Then it is known that 
 its eigenfunctions can be ordered (\cite{KLW 13}, \cite{AGZ 09}) 
$$0 \leq \lambda_1 (x) \leq \cdots 
\leq \lambda_n (x), \;\; x \in X.$$

\begin{prop} Let $A$ be a $n \times n$ positive definite 
symmetric  matrix with entries in the Riesz algebra $\cR$ 
on the measure space $X$. Let $\mathbf{y}_j$ (in $\cR^n$) be the eigenvector
of $A$ with eigenvalue $\lambda_j$. 
Then 
\{$\mathbf{y}_j\}$  form a basis of $\cR^n$. 
\end{prop}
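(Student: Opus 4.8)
The plan is to prove the statement by showing that the matrix $Y = [\mathbf{y}_1 \mid \cdots \mid \mathbf{y}_n]$ whose columns are the eigenvectors is invertible over $\cR$. Indeed, an $n$-tuple of vectors in the free module $\cR^n$ is a basis precisely when the matrix they form is invertible over the ring, so invertibility of $Y$ is exactly what is needed. To establish invertibility I would exploit orthogonality of eigenvectors: the goal is to arrange that the $\mathbf{y}_j$ are orthonormal for the standard inner product, so that $Y^T Y = I$, whence $(\det Y)^2 = 1$ and $\det Y$ is its own inverse, hence a unit of $\cR$.

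The steps, in order, would be as follows. First I would record the conjugacy computation: from $A\mathbf{y}_i = \lambda_i \mathbf{y}_i$ and the symmetry of $A$ one obtains $(\lambda_i - \lambda_j)\,\mathbf{y}_i^T\mathbf{y}_j = 0$, so that $\mathbf{y}_i$ and $\mathbf{y}_j$ are orthogonal whenever $\lambda_i - \lambda_j$ is invertible in $\cR$. Second, I would pass to the pointwise picture on $X$: positive definiteness of $A$ over $\cR$, namely $y^T A y \succ 0$ for every nonzero $y \in \cR^n$, forces $A(x)$ to be a genuine positive definite real symmetric matrix for almost every $x \in X$ (testing against constant vectors gives positive semidefiniteness a.e., and definiteness is recovered from strict positivity $\lambda_1 \succ 0$ of the least eigenvalue, which follows from $\mathbf{y}_1^T A \mathbf{y}_1 \succ 0$). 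Third, at such $x$ the classical real spectral theorem applies, and the cited measurable diagonalization results (\cite{KLW 13}, \cite{AGZ 09}) allow one to select the $\mathbf{y}_j$ as a measurable field of orthonormal eigenbases, so that $\mathbf{y}_i^T\mathbf{y}_j = \delta_{ij}$ holds in $\cR$. Finally $Y^T Y = I$ yields $\det Y \cdot \det Y = 1$, so $Y$ is invertible and its columns form a basis; equivalently one sees directly that $\det Y(x) = \pm 1 \neq 0$ almost everywhere, and an element of $\cR$ is a unit exactly when it is represented by an almost-everywhere nonzero function.

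The main obstacle is the behaviour on the locus where eigenvalues coincide. The purely algebraic orthogonality above is available only where $\lambda_i - \lambda_j$ is invertible, i.e. where the two eigenvalues differ almost everywhere; on a set of positive measure where $\lambda_i = \lambda_j$ the argument collapses, since the factor $\lambda_i - \lambda_j$ is a zero divisor and one cannot separate the corresponding eigenvectors by eigenvalue alone. This is precisely where the multiplicity-aware measurable selection is needed: within each eigenspace one performs a pointwise Gram--Schmidt orthonormalization and must verify that the outcome can be chosen measurably and glued coherently across the regions of differing multiplicity, so as to produce honest vectors $\mathbf{y}_j \in \cR^n$ satisfying $Y^T Y = I$. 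By contrast, the two remaining ingredients, namely the reduction of positive definiteness over $\cR$ to the pointwise condition and the passage from $\det Y(x) \neq 0$ a.e. to invertibility of $\det Y$ in $\cR$, are routine once the identification of units of $\cR$ with almost-everywhere nonzero functions is in hand.
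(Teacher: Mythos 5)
Your proposal is correct in substance but follows a genuinely different route from the paper. The paper's proof is a two-line appeal to its abstract quadratic-form machinery: since $A$ is positive definite, $\la y,y\ra_A$ is invertible for every nonzero $y$, so the submodule $N=\{y: \la y,y\ra_A\notin\cR^\times\}$ of Proposition \ref{lem:basis} is zero, and part (3) of that proposition then yields an orthogonal splitting $\cR^n=\cR\mathbf{y}_1\perp\cdots\perp\cR\mathbf{y}_n$ with respect to $\la\cdot,\cdot\ra_A$. You instead work with the standard inner product and reduce everything to invertibility of the matrix $Y$ of eigenvectors via $Y^TY=I$, passing through the pointwise picture ($A(x)$ positive definite a.e., classical spectral theorem, measurable selection of an orthonormal eigenbasis) and the identification of units of $\cR$ with a.e.-nonzero functions; all of these steps are sound, including the characterization of units (if $g\neq 0$ a.e.\ then $\la g^2\ra\succ 0$ lies in the multiplicative set $S$, so $\la g\ra$ becomes invertible in the localization). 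What each approach buys: the paper's argument stays entirely inside the abstract Riesz-algebra formalism and needs no measure theory beyond what Proposition \ref{lem:basis} already encodes, but it silently identifies the orthogonal generators $x_1,\dots,x_k$ produced by that proposition with the given eigenvectors $\mathbf{y}_j$ (and $k$ with $n$), which is not justified there; your argument makes the eigenvectors themselves the protagonists and produces an explicit unit determinant, at the cost of having to confront the measurable selection of orthonormal eigenbases on the strata where eigenvalue multiplicities jump. That selection problem is the one genuinely nontrivial ingredient, it is needed (implicitly) by both proofs since the proposition presupposes eigenvectors $\mathbf{y}_j\in\cR^n$ exist at all, and you are the only one of the two to name it and sketch how it is resolved via the cited measurable-diagonalization results.
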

\begin{proof}
If $A$ is positive definite then in the notation of 
the proposition \ref{lem:basis} the space $N$ associated to the bilinear form $\la \cdot, \cdot \ra$ is zero. 
And so
$$\cR^n = \cR \mathbf{y}_1 \perp  \cdots \perp  \cR \mathbf{y}_n.$$
\end{proof}

 Let $A$ be a $n \times n$ positive definite 
symmetric   matrix with entries in the Riesz algebra $\cR$ on the measure space $X$. Write
$\lambda_{\min}$ for its minimal
 eigenfunction 
and $\lambda_{\max}$ for its maximal eigenfunction. Put
$$\underline{\lambda} = \inf_{x \in X} \lambda_{\min}(x)
\qquad  \overline{\lambda}= \sup_{x \in X} \lambda_{\max}(x).$$
From a polynomial $q= \sum_i \alpha_i(x) T^i$ in $\cR[T]$ we get a 
function $q(x, t)$ on $X \times [\underline{\lambda}, \overline{\lambda}]$. 
Let us write 
$$M^A ( q)=
 \sup_{\substack{\underline{\lambda} \le t \le 
 \overline{\lambda}\\ x \in X}}|q(x, t)|.$$
 \begin{prop}\label{pro:mq} Notations as above.
If 
$q$ is a polynomial over $\cR$, and $\mathbf{x} \in \cR^n$ then 
in $\cR$ we have
$$\Vert q(A) \mathbf{x}\Vert_A \leq 
M^A(q)
\cdot \Vert \mathbf{x} \Vert_A$$
\end{prop}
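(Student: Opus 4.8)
The plan is to diagonalize $A$ against its eigenbasis and thereby reduce the asserted operator bound to a scalar pointwise estimate on the polynomial $q$. The whole difficulty is then pushed into one structural fact about square roots in $\cR$.

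First I would invoke the preceding proposition: since $A$ is positive definite, the eigenvectors $\mathbf{y}_1,\dots,\mathbf{y}_n$ form a basis of $\cR^n$ and the decomposition $\cR^n = \cR\mathbf{y}_1 \perp \cdots \perp \cR\mathbf{y}_n$ is orthogonal for $\langle\cdot,\cdot\rangle_A$, so $\langle \mathbf{y}_i,\mathbf{y}_j\rangle_A = 0$ for $i\ne j$. Writing $\mathbf{x} = \sum_j c_j \mathbf{y}_j$ with $c_j \in \cR$, orthogonality gives
\[
\Vert \mathbf{x}\Vert_A^2 = \sum_j c_j^2\,\langle \mathbf{y}_j,\mathbf{y}_j\rangle_A .
\]
Because $A\mathbf{y}_j = \lambda_j \mathbf{y}_j$ we have $A^i\mathbf{y}_j = \lambda_j^i \mathbf{y}_j$, hence $q(A)\mathbf{y}_j = q(x,\lambda_j)\,\mathbf{y}_j$, where $q(x,\lambda_j)$ is the element of $\cR$ obtained by substituting $t=\lambda_j$ into $q(x,t)$. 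Thus $q(A)\mathbf{x} = \sum_j c_j\,q(x,\lambda_j)\,\mathbf{y}_j$, and orthogonality again yields
\[
\Vert q(A)\mathbf{x}\Vert_A^2 = \sum_j c_j^2\,q(x,\lambda_j)^2\,\langle \mathbf{y}_j,\mathbf{y}_j\rangle_A .
\]

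Next comes the scalar estimate. For each $j$ and almost every $x$ we have $\underline{\lambda}\le \lambda_1(x)\le \lambda_j(x)\le \lambda_n(x)\le \overline{\lambda}$, so $|q(x,\lambda_j)|\le M^A(q)$ by the definition of $M^A(q)$, and squaring gives $q(x,\lambda_j)^2 \le M^A(q)^2$ in $\cR$. Each coefficient $c_j^2\,\langle \mathbf{y}_j,\mathbf{y}_j\rangle_A$ is $\ge 0$: indeed $c_j^2\ge 0$ because squares are positive, while $\langle \mathbf{y}_j,\mathbf{y}_j\rangle_A = \mathbf{y}_j^T A \mathbf{y}_j \succ 0$ since $A$ is positive definite and $\mathbf{y}_j\ne 0$. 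Multiplying the scalar inequality by these nonnegative coefficients and summing, using that sums and products of nonnegative elements are nonnegative, gives
\[
\Vert q(A)\mathbf{x}\Vert_A^2 \le M^A(q)^2\,\Vert \mathbf{x}\Vert_A^2 = \bigl(M^A(q)\,\Vert \mathbf{x}\Vert_A\bigr)^2 .
\]

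Finally I would pass to square roots, and this is where I expect the main obstacle to lie. The fact needed is monotonicity of the square root: for $u,v\ge 0$ in $\cR$ with $u^2\le v^2$ one has $u\le v$. When $u,v$ are strictly positive this follows from the factorization $v^2-u^2 = (v-u)(v+u)$ together with the observation that the inverse of a strictly positive element is again strictly positive, since $c^{-1} = c\,(c^{-1})^2 \ge 0$; one multiplies $v^2-u^2\ge 0$ by $(u+v)^{-1}\succ 0$. The degenerate situations, where $\mathbf{x}=0$, or $q(A)\mathbf{x}=0$, or the relevant quadratic value fails to be invertible so that the abstract $\sqrt{\,\cdot\,}$ is not a priori available, are disposed of either directly from $\Vert 0\Vert_A = 0$ or by checking the inequality pointwise in the concrete model of measurable functions, where $u(x)^2\le v(x)^2$ with $u(x),v(x)\ge 0$ forces $u(x)\le v(x)$ almost everywhere. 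Everything preceding this step is the formal diagonalization already licensed by the eigenbasis proposition, so the bookkeeping around non-invertible quadratic values in the square-root step is the only genuinely delicate point.
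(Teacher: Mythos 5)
Your proof is correct and follows essentially the same route as the paper: expand $\mathbf{x}$ in the eigenbasis of $A$, bound $q(x,\lambda_j)$ by $M^A(q)$, and take square roots. You are in fact more careful than the paper at the two points it glosses over --- you invoke $A$-orthogonality of the $\mathbf{y}_j$ to reduce to a diagonal sum of nonnegative terms (the paper keeps the double sum $\sum_{i,j} q(\lambda_i)q(\lambda_j)\beta_i\beta_j\mathbf{y}_i^T A\mathbf{y}_j$ and applies the scalar bound termwise, which is only legitimate once the cross terms are known to vanish, since the factors $\beta_i\beta_j\mathbf{y}_i^T A\mathbf{y}_j$ need not be nonnegative for $i\ne j$), and you justify the final passage from $u^2\le v^2$ to $u\le v$ for nonnegative $u,v$ in $\cR$, which the paper leaves implicit.
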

\begin{proof}
Take $\mathbf{y}_j$ as in the previous proposition, write $\mathbf{x}=\sum_j \beta_j \mathbf{y}_j$ then 
$$q(A)\mathbf{x} = \sum_i \beta_i q(\lambda_i)\mathbf{y}_i,$$
and 
\begin{align*}
\Vert q(A) \mathbf{x}\Vert_A^2 
&= (\sum_i \beta_i q(\lambda_i)\mathbf{y}_i)^T A 
(\sum_j \beta_j q(\lambda_j)\mathbf{y}_j)\\
&= \sum_{i,j}  q(\lambda_i) q(\lambda_j)
\beta_i \beta_j \mathbf{y}_i^T A \mathbf{y}_j \\
&\leq (M^A(q))^2 \sum_{i,j} \beta_i \beta_j \mathbf{y}_i^T A \mathbf{y}_j \\
&=  (M^A(q))^2 \Vert \mathbf{x} \Vert_A^2
\end{align*}
\end{proof}

\section{Conjugate gradient method}

Let $A$ be a $n \times n$ positive definite 
symmetric   matrix with entries in the Riesz algebra $\cR$ 
on a measure space $X$ and
 $\mathbf{b}$  a  vector in $\cR^n$. 
 We try to find iteratively  a solution
 $\mathbf{x} \in \cR^n$ of the linear system $A\mathbf{x} =\mathbf{b}$. 
 
We  start with any point $\bx_0$ in $\cR^n$ and take
$\mathbf{r}_0= \mathbf{p}_0 = \mathbf{b} - A \bx_0$.

At the $k$-th step we compute 
\begin{align*}
\bx_{k} &= \bx_{k-1} + \alpha_{k-1} \mathbf{p}_{k-1}, \\
\mathbf{r}_k &= \mathbf{b} - A \bx_k, \\
\mathbf{p}_k &= \mathbf{r}_k - 
\frac{\mathbf{r}_k^T A \mathbf{p}_{k-1}}{\mathbf{p}_{k-1}^T A \mathbf{p}_{k-1}} \mathbf{p}_{k-1},\\
\alpha_k &= 
\frac{\mathbf{r}_k^T \mathbf{p}_k}{\mathbf{p}_k^T A \mathbf{p}_k}.
\end{align*}
\noindent Note that  $\mathbf{p}_k^T A \mathbf{p}_k$ is strictly 
positive and by construction is invertible in the commutative algebra 
$\cR$. 
The  term 
$\mathbf{r}_k = \mathbf{b} - A \bx_k$
is called the residue term. If $\mathbf{r}_k = 0$ 
then $\bx_k$ is the solution we sort after and the computation stops. 
In this case we shall say that CGM is \textit{successful}
 for the system 
$A\mathbf{x} =\mathbf{b}$. 
CGM is only called for when $n$ is huge compare with the size of the computing facility. In this case the program will often be stopped when 
$|\sup_{ X} \mathbf{r}_k^T \mathbf{r}_k|$ is smaller than a 
pre-determined small number. 

The term $\alpha_k$ is called the control term. 
If $\alpha_k$ is in the 
set $S$ of positive definite or negative definite elements
we say it is acceptable and we continue. 
We shall also say  in this case that the CGM is \textit{feasible} for the given system at the $k$-th step. 
If $\alpha_k$ is not in $S$ we stop the 
program. The point is this. If $\alpha_k(x) =0$ at $x \in X$
we can say $(\mathbf{b} - A \mathbf{x}_k)(x)=0$ and we find a solution
$\bx_k(x)$ of the 
system $A(x) \bx(b)= \mathbf{b}(x)$ at the point $x$. But this does tell us if the function  $\bx$ gives the solution at
other points in the space $X$. The problem being that the algebra 
$\cR$ has plenty of non-zero zero divisors. This shows the difficulty of solving for functions. But if we do find a function solution we have a global solution rather than a solution at a point in the space $X$. 
This shows the convenience of working in an abstract Riesz algebra. 

This computation will have a failure set
$$\Upsilon_{k} = \{x \in X : \mathbf{p}_{k-1}^T A \mathbf{p}_{k-1}(x)=0\}$$
which is of measure zero by the choice of $A$.
As countable union of sets of measure zero has 
 measure zero, we know that
 we can continue outside of $\cup_k \Upsilon_k $, that is the 
 computation of the conjugate gradient can be done a.e.
 
By a \textit{Krylov module} of the $n \times n$
  matrix $A$ we mean a $\cR$-module
$\cK (A, \mathbf{y}, k) $ spanned over $\cR$ 
by the set $\{\mathbf{y}, A\mathbf{y}, \cdots, A^k \mathbf{y}\}$ 
where $\mathbf{y}$ is a  vector  
in $\cR^n$ and $k$ is an integer. 

\begin{thm}\label{pro:cgp} In the above notations  assuming 
that CGM is feasible 
 for  the linear system $A \bx = \mathbf{b}$. Then
\begin{enumerate}
\item $\mathbf{p}^T_i \mathbf{r}_j =0$ for $0 \leq i < j \leq k$.
\item $\mathbf{r}^T_i \mathbf{r}_j =0$ for $ i \ne j$, $0 \leq i,  j \leq k$.
\item $\la \mathbf{p}_i, \mathbf{p}_j \ra_A =0$  for $ i \ne j$, $0 \leq i,  j \leq k$.
\item $\cK (A, \mathbf{r}_0, k) $ is spanned by $\{\mathbf{r}_0, \cdots, \mathbf{r}_k\}$ or by
$\{\mathbf{p}_0, \cdots, \mathbf{p}_k\}$.
\end{enumerate}
\end{thm}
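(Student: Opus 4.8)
The plan is to run the classical conjugate-gradient induction on $k$, establishing (1)--(4) simultaneously, the only real subtlety being that in $\cR$ one may divide only by invertible elements. Write $\beta_k=\frac{\mathbf{r}_k^T A\mathbf{p}_{k-1}}{\mathbf{p}_{k-1}^T A\mathbf{p}_{k-1}}$, so that $\mathbf{p}_k=\mathbf{r}_k-\beta_k\mathbf{p}_{k-1}$; the denominator here is $\succ 0$, hence invertible, because $A$ is positive definite and $\mathbf{p}_{k-1}\ne 0$. Feeding the update $\bx_k=\bx_{k-1}+\alpha_{k-1}\mathbf{p}_{k-1}$ into $\mathbf{r}_k=\mathbf{b}-A\bx_k$ yields the residual recurrence $\mathbf{r}_k=\mathbf{r}_{k-1}-\alpha_{k-1}A\mathbf{p}_{k-1}$, which I would use throughout. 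Two relations follow at once from the defining formulas, with no induction: substituting $\beta_k$ gives $\mathbf{p}_k^T A\mathbf{p}_{k-1}=0$, and substituting $\alpha_{k-1}$, together with $\mathbf{r}_{k-1}^T\mathbf{p}_{k-1}=\mathbf{p}_{k-1}^T\mathbf{r}_{k-1}$, gives $\mathbf{p}_{k-1}^T\mathbf{r}_k=0$.

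Assuming (1)--(4) for all indices up to $k-1$, I would prove the relations carrying the index $k$ in the order (1), (2), (3), each feeding the next. For (1), with $i<k$, the recurrence gives $\mathbf{p}_i^T\mathbf{r}_k=\mathbf{p}_i^T\mathbf{r}_{k-1}-\alpha_{k-1}\mathbf{p}_i^T A\mathbf{p}_{k-1}$; when $i<k-1$ both terms vanish by the inductive (1) and (3), and when $i=k-1$ this is exactly the immediate relation above. For (2), with $i<k$, I rewrite $\mathbf{r}_i=\mathbf{p}_i+\beta_i\mathbf{p}_{i-1}$ (and $\mathbf{r}_0=\mathbf{p}_0$) and apply the (1)-relations just obtained, so that $\mathbf{r}_i^T\mathbf{r}_k=\mathbf{p}_i^T\mathbf{r}_k+\beta_i\mathbf{p}_{i-1}^T\mathbf{r}_k=0$.

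For (3), with $i<k$, I expand $\mathbf{p}_i^T A\mathbf{p}_k=\mathbf{p}_i^T A\mathbf{r}_k-\beta_k\mathbf{p}_i^T A\mathbf{p}_{k-1}$. The case $i=k-1$ is the immediate conjugacy relation; for $i<k-1$ the inductive (3) kills the second term, while for the first I solve the residual recurrence to get $A\mathbf{p}_i=\alpha_i^{-1}(\mathbf{r}_i-\mathbf{r}_{i+1})$ -- this is the step that uses \emph{feasibility}, namely $\alpha_i\in S$ and hence $\alpha_i$ invertible -- so that $\mathbf{p}_i^T A\mathbf{r}_k=\alpha_i^{-1}(\mathbf{r}_i-\mathbf{r}_{i+1})^T\mathbf{r}_k=0$ by the (2)-relations, since $i+1\le k-1<k$. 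For (4), I would track leading coefficients: the same induction shows that $\mathbf{r}_k$ and $\mathbf{p}_k$ each equal $c_k A^k\mathbf{r}_0$ plus a combination of $\mathbf{r}_0, A\mathbf{r}_0,\ldots, A^{k-1}\mathbf{r}_0$, with $c_k=\pm\alpha_0\cdots\alpha_{k-1}$ invertible by feasibility. The transition matrix between each of $\{\mathbf{r}_0,\ldots,\mathbf{r}_k\}$, $\{\mathbf{p}_0,\ldots,\mathbf{p}_k\}$ and $\{\mathbf{r}_0, A\mathbf{r}_0,\ldots, A^k\mathbf{r}_0\}$ is then triangular with invertible diagonal, hence invertible over $\cR$, so all three sets span the same module $\cK(A,\mathbf{r}_0,k)$.

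The one genuine obstacle, and the reason the feasibility hypothesis is imposed, is precisely this invertibility bookkeeping. Over a field statements (1)--(4) are entirely routine because every nonzero scalar inverts; over $\cR$ the argument collapses unless each denominator $\mathbf{p}_j^T A\mathbf{p}_j$ is strictly positive -- which positive definiteness of $A$ supplies, since strictly positive elements of a Riesz algebra are invertible -- and unless each control term $\alpha_j$ lies in $S$ -- which is exactly feasibility. These two facts are what legitimise the solved recurrence $A\mathbf{p}_i=\alpha_i^{-1}(\mathbf{r}_i-\mathbf{r}_{i+1})$ in step (3) and the invertible leading coefficient $c_k$ in step (4), and everything else is formal manipulation identical to the numerical case.
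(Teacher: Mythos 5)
Your proposal is correct and follows the same simultaneous induction on $k$ that the paper uses, with the same immediate identities ($\mathbf{p}_k^TA\mathbf{p}_{k-1}=0$ and $\mathbf{p}_{k-1}^T\mathbf{r}_k=0$ from the definitions of $\beta_k$ and $\alpha_{k-1}$) driving parts (1)--(3). The small divergences are all places where you are more explicit than the paper: for (2) you use the pointwise relation $\mathbf{r}_i=\mathbf{p}_i+\beta_i\mathbf{p}_{i-1}$ where the paper invokes the inductive span equality from (4); for (3) you spell out the solved recurrence $A\mathbf{p}_i=\alpha_i^{-1}(\mathbf{r}_i-\mathbf{r}_{i+1})$ and flag that this is exactly where feasibility (invertibility of $\alpha_i$) enters, a step the paper compresses into ``assuming the $\alpha_i$ are acceptable, it follows from part (2)''; and for (4) you argue via triangular transition matrices with invertible diagonal entries $\pm\alpha_0\cdots\alpha_{k-1}$, whereas the paper proves two-sided containment and then deduces linear independence from the orthogonality in (2) --- your route is arguably safer over a ring with zero divisors, since orthogonality alone does not give independence unless the relevant inner products are non-zero-divisors. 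No gaps; if anything your write-up makes the role of the feasibility hypothesis clearer than the original.
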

\begin{proof}
We prove   the induction step from $k$ to $k+1$.

For part $(1)$ - from the definition of $\mathbf{r}_k$ we get 
$$\mathbf{r}_{k+1} = \mathbf{r}_{k} - \alpha_{k} A \mathbf{p}_{k}.$$
From this and the definition of $\alpha_{k}$ it follows that 
$\mathbf{p}^T_k \mathbf{r}_{k+1} =0$ 
and also 
by the induction hypothesis for parts $(1)$ and $(3)$,
$\mathbf{p}^T_i \mathbf{r}_{k+1} =0$ for $0 \leq i  \leq k-1$. 

For part $(2)$ we get from part $(1)$ that  
$\mathbf{p}^T_i \mathbf{r}_{k+1} =0$ for $0 \leq i \leq k$. By 
the induction hypothesis of part $(4)$ which says 
that $\{\mathbf{r}_0, \cdots, \mathbf{r}_k\}$ and
$\{\mathbf{p}_0, \cdots, \mathbf{p}_k\}$ span the same module, 
we conclude that
$\mathbf{r}^T_i \mathbf{r}_{k+1} =0$ for $0 \leq i  \leq k$. 

For part $(3)$ - we write 
$\beta_k =(\mathbf{r}_k^T A \mathbf{p}_{k-1})(\mathbf{p}_{k-1}^T A \mathbf{p}_{k-1})^{-1}$,  the construction says
$\mathbf{p}_{k+1} = \mathbf{r}_{k+1} - \beta_{k+1} \mathbf{p}_{k}$. 
And so
$$\mathbf{p}_{k+1}^T A \mathbf{p}_{i}
= \mathbf{r}_{k+1}^T A \mathbf{p}_{i} -
\beta_{k+1} \mathbf{p}_{k}^T A \mathbf{p}_{i}.
$$
For $0 \leq i < k$ assuming that the program can continue and the 
$\alpha_i$ ($0 \leq i \leq k-1$) are acceptable, 
 from part $(2)$ proved above and by 
the induction hypothesis of part $(3)$ it follows that 
$\mathbf{p}_{k+1}^T A \mathbf{p}_{i}
=0$. By construction $\mathbf{p}_{k+1}^T A \mathbf{p}_{k}
= 0$. 

For part $(4)$ we start with the induction hypothesis that 
either $\{\mathbf{r}_0, \cdots, \mathbf{r}_k\}$ or 
$\{\mathbf{p}_0, \cdots, \mathbf{p}_k\}$ spans  
$\cK (A, \mathbf{r}_0, k) $. Then the formulas
$\mathbf{r}_{k+1} = \mathbf{r}_{k} - \alpha_{k} A \mathbf{p}_{k}$, and 
$\mathbf{p}_k = \mathbf{r}_k - \beta_k \mathbf{p}_{k-1}$,
tell us that $\mathbf{r}_{k+1}, \mathbf{p}_{k+1}$
are in $\cK (A, \mathbf{p}_0, k+1) $. 

For $A^k \mathbf{r}_0$ in  $\cK (A, \mathbf{r}_0, k) $ we can write 
$A^k \mathbf{r}_0 = \sum_{i=0}^k \gamma_i \mathbf{p}_{i}$ with 
$ \gamma_i \in \cR$. Then 
$A^{k+1} \mathbf{r}_0 = \sum_{i=0}^k \gamma_i A \mathbf{p}_{i}$. 
From $\mathbf{p}_{i}$ in 
$\cK (A, \mathbf{r}_0, k) $ we get $A \mathbf{p}_{i}$ and so 
$A^{k+1} \mathbf{r}_0$ is in $\cK (A, \mathbf{r}_0, k+1) $. 

Moreover parts $(2)$  tells us that the vectors
$\mathbf{r}_0, \cdots, \mathbf{r}_{k+1}$ are linearly independent so are 
$\mathbf{p}_0, \cdots, \mathbf{p}_{k+1}$. Thus
$\cK (A, \mathbf{p}_0, k+1) $ is free of rank $k+1$ over $\cR$.
\end{proof}

\noindent Remark. The theorem tells us that the CGM stops before the 
$n+1$ step either when it is successful or when it is not feasible.

\begin{prop}\label{pro:cge}In the above notations  assuming 
that CGM is feasible 
 for  the linear system $A \bx = \mathbf{b}$ 
 up to the $\ell$-th step yielding an output ${\bx}_\ell$. 
Then for $k < \ell$ we have 
$$\| \bx_{\ell}-\bx_k\|_A
\le   \inf \{\| \bx_{\ell}-\bx\|_A: 
\bx\in \bx_0+\cK(A,\mathbf{r}_0,k-1)\}.$$
\end{prop}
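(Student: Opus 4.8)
The plan is to prove the classical Krylov optimality property by an $A$-orthogonal (Pythagorean) decomposition of the error, the only genuinely new work being the order-theoretic passage from an inequality of squared norms to an inequality of norms in $\cR$.

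First I would locate all the relevant vectors inside the Krylov module. Unwinding the recursion $\bx_k = \bx_{k-1} + \alpha_{k-1}\mathbf{p}_{k-1}$ gives $\bx_k = \bx_0 + \sum_{i=0}^{k-1}\alpha_i\mathbf{p}_i$, so $\bx_k \in \bx_0 + \cK(A,\mathbf{r}_0,k-1)$ and $\bx_\ell - \bx_k = \sum_{i=k}^{\ell-1}\alpha_i\mathbf{p}_i$. By Theorem \ref{pro:cgp}(4) the family $\{\mathbf{p}_0,\dots,\mathbf{p}_{k-1}\}$ is a free basis of $\cK(A,\mathbf{r}_0,k-1)$, so any $\bx \in \bx_0 + \cK(A,\mathbf{r}_0,k-1)$ has the form $\bx = \bx_0 + \sum_{i=0}^{k-1}c_i\mathbf{p}_i$ with $c_i \in \cR$. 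Consequently $\bx_k - \bx$ lies in the $\cR$-span of $\mathbf{p}_0,\dots,\mathbf{p}_{k-1}$, whereas $\bx_\ell - \bx_k$ lies in the span of $\mathbf{p}_k,\dots,\mathbf{p}_{\ell-1}$.

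The core step writes $\bx_\ell - \bx = (\bx_\ell - \bx_k) + (\bx_k - \bx)$ and expands the squared norm by $\cR$-bilinearity of $\la\cdot,\cdot\ra_A$. The cross term $2\la \bx_\ell - \bx_k,\ \bx_k - \bx\ra_A$ is a combination of the products $\la\mathbf{p}_i,\mathbf{p}_j\ra_A$ with $i \in \{k,\dots,\ell-1\}$ and $j \in \{0,\dots,k-1\}$; since these index ranges are disjoint, every such $i \ne j$ and the $A$-conjugacy relations of Theorem \ref{pro:cgp}(3) force the cross term to vanish. This gives the Pythagorean identity
$$\|\bx_\ell - \bx\|_A^2 = \|\bx_\ell - \bx_k\|_A^2 + \|\bx_k - \bx\|_A^2 \quad \text{in } \cR,$$
and since $A$ is positive definite the last summand satisfies $\|\bx_k - \bx\|_A^2 = \la \bx_k - \bx, \bx_k - \bx\ra_A \ge 0$, so $\|\bx_\ell - \bx\|_A^2 \ge \|\bx_\ell - \bx_k\|_A^2$.

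The step I expect to be the real obstacle is deducing $\|\bx_\ell - \bx_k\|_A \le \|\bx_\ell - \bx\|_A$ from this inequality of squares, which is automatic over a field but requires monotonicity of the square root in $\cR$. I would prove the auxiliary fact that $a \ge b \ge 0$ in $\cR$, with $b$ admitting a strictly positive square root, implies $\sqrt a \ge \sqrt b$: from $a - b = (\sqrt a - \sqrt b)(\sqrt a + \sqrt b) \ge 0$, and since $\sqrt b \succ 0$ forces $\sqrt a + \sqrt b \succ 0$ (a nonnegative plus a strictly positive element of the measure-space algebra $\cR$ is again strictly positive, hence invertible with nonnegative inverse), multiplying through by $(\sqrt a + \sqrt b)^{-1} \ge 0$ yields $\sqrt a - \sqrt b \ge 0$. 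Taking $a = \|\bx_\ell - \bx\|_A^2$ and $b = \|\bx_\ell - \bx_k\|_A^2$ (the degenerate case $\bx_\ell = \bx_k$ being trivial, as then the left side vanishes) gives $\|\bx_\ell - \bx_k\|_A \le \|\bx_\ell - \bx\|_A$ for every admissible $\bx$. As $\bx_k$ itself lies in $\bx_0 + \cK(A,\mathbf{r}_0,k-1)$, the element $\|\bx_\ell - \bx_k\|_A$ is both a lower bound for the set and attained by it; hence it equals the infimum, and the asserted inequality follows.
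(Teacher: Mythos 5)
Your proof follows essentially the same route as the paper: the telescoped expressions for $\bx_k$ and $\bx_\ell$, the $A$-orthogonality of the spans $\{\mathbf{p}_0,\dots,\mathbf{p}_{k-1}\}$ and $\{\mathbf{p}_k,\dots,\mathbf{p}_{\ell-1}\}$ from Theorem \ref{pro:cgp}(3), and the resulting Pythagorean identity. The one place you go beyond the paper is worthwhile: the paper passes silently from $\|\bx_\ell-\bx\|_A^2 \ge \|\bx_\ell-\bx_k\|_A^2$ to the unsquared inequality, whereas you supply the needed monotonicity of $\sqrt{\cdot}$ in $\cR$ via the factorization $a-b=(\sqrt a-\sqrt b)(\sqrt a+\sqrt b)$, and you also justify that the infimum exists by noting it is attained at $\bx=\bx_k$ --- both points the paper leaves implicit.
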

\begin{proof}
From the construction we have
$\bx_{k} = \bx_{k-1} + \alpha_{k-1} \mathbf{p}_{k-1}$. It follows that
(i) $\bx_{k} = \bx_{0} + \alpha_{0} \mathbf{p}_{0} + \cdots +\alpha_{k-1} \mathbf{p}_{k-1}$, and 
(ii) $\bx_{\ell} = \bx_{k} + \alpha_{k} \mathbf{p}_{k} + \cdots +
\alpha_{\ell-1} \mathbf{p}_{\ell-1}$.

From (i) we see that $\bx_{k} \in \bx_0+\cK(A,\mathbf{r}_0,k-1)$
and so 
for any $\bx\in \bx_0+\cK(A,\mathbf{r}_0,k-1)$
 we get $\bx_k-\bx$ is in $\cK(A,\mathbf{r}_0,k-1)$ which is spanned 
 by $\mathbf{p}_{0}, \cdots,  \mathbf{p}_{k-1}$. While (ii) says 
 $\bx_{\ell} - \bx_{k}$ is in the submodule spanned by 
$\mathbf{p}_{k}, \ldots, \mathbf{p}_{\ell-1}$.  By
 theorem \ref{pro:cgp}(3) that, $\langle \bx_{\ell}-\bx_k,\bx_k-\bx\rangle_A=0$ and so the proposition follows from 
 $$ \| \bx_{\ell}-\bx\|^2_A=\| \bx_{\ell}-\bx_k\|^2_A+
\| \bx_k-\bx\|^2_A\ge\| \bx_{\ell}-\bx_k \|^2_A.$$
\end{proof}

\section{Rate of convergence}

We continue to write  $\cR$ for
 the Riesz algebra  on the measure space $X$. 
We are interested in the set $\cR[T]_k^1$  of polynomials 
in the variable $T$  over $\cR$ of degree $\leq k$ with constant term
$1$. 
Let us consider an element 
$p= 1 + a_1 (x) T + \cdots +  a_k(x) T^k$ in $\cR[T]_k^1$ as a function  on $X \times [a,b]$
for some $0 < a < b$. Then we can consider  the real valued map 
$M$ on  
$\cR[T]_k^1$
given by
$$M ( p(x, T))=
 \sup_{\substack{a\le t \le b\\ x \in X}}|p(x, t)|.$$

We can apply the standard result in approximation theory at least
pointwise in $X$ to find a lower bound for $M$.
Namely let
 $$C_k(T)=
\frac{1}{2}[(T+\sqrt{T^2-1})^k+(T+\sqrt{T^2-1})^{-k}]$$
denotes the Chebyshev polynomial of degree $k$. And put
$$\Ch(x)= C_k (\frac{b+a-2T}{b-a}) / C_k (\frac{b+a}{b-a}).$$
This is a polynomial in $T$ with real coefficients.
Let $\RR[X]_k^1$ denote the set   of polynomials over $\RR$ of degree $\leq k$ with constant term
$1$. For a real polynomial $p$ write
$$m(p) = \sup_{a\le t \le b} |p(t)|$$
Then by approximation theory (\cite{Riv 90}; \cite{Axe 94} Appendix B) we have
$$\sup_{a\le t \le b} \Ch(t) = \inf_{p \in \RR[X]_k^1} m(p). $$
But
$$\sup_{a\le t \le b}|\Ch(t)|
\leq 2\left(\frac{\sqrt{\kappa}-1}{\sqrt{\kappa}+1}\right)^k,
\qquad \text{where}\quad
\kappa = \frac{b}{a}.$$
So
$$\inf_{p \in \RR[T]_k^1} m(p)
\leq 2\left(\frac{\sqrt{\kappa}-1}{\sqrt{\kappa}+1}\right)^k. $$

Now $\cR[T]_k^1 \supseteq \RR[T]_k^1$. And for
$p \in \RR[T]_k^1$ we have $M(p) = m(p)$. Thus
$$
 \inf\limits_{p \in \cR[T]_k^1} M(p)
 \leq \inf\limits_{p \in \RR[T]_k^1} M(p)
 = \inf\limits_{p \in \RR[T]_k^1} m(p)
 \leq 2\left(\frac{\sqrt{\kappa}-1}{\sqrt{\kappa}+1}\right)^k,
\qquad \text{where}\quad
\kappa = \frac{b}{a}.
$$

\begin{thm}\label{thm:rate}
Let    $A$ be a $n \times n$ matrix 
with entries in the Riesz algebra of 
 a measure space $X$. 
Write
$\lambda_{\min}$ for its minimal
 eigenfunction and 
 $\lambda_{\max}$ for its maximal eigenfunction. Put
$$\underline{\lambda} = \inf_{x \in X} \lambda_{\min}(x)
\qquad  \overline{\lambda}= \sup_{x \in X} \lambda_{\max}(x),$$
and
$\kappa = \overline{\lambda}/\underline{\lambda} $. 
Assume that the conjugate gradient method 
 for  the linear system $A \bx = \mathbf{b}$ 
is successful and yields an exact solution $\bx^*$. 
Then  the $k$-th
feasible output ${\bx}_k$ satisfies the following estimate
$$\| \bx^* - \bx_k \|_A
\leq 2 \left( \frac{\sqrt{\kappa} - 1}{\sqrt{\kappa} + 1}\right)^{k} \| \bx^* - \bx_0\|_A. $$
\end{thm}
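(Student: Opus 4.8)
The plan is to run the classical Chebyshev argument for the CGM convergence rate, but to keep every estimate inside the partial order of $\cR$ and, crucially, to replace the usual infimum over polynomials by a single explicit choice of a \emph{real} Chebyshev polynomial, so that the scalar bound recorded just before the theorem applies verbatim. First I would set $e_0 = \bx^* - \bx_0$ and use that success of CGM means $A\bx^* = \mathbf{b}$, so the initial residue is $\mathbf{r}_0 = \mathbf{b} - A\bx_0 = A e_0$. Hence $A^i \mathbf{r}_0 = A^{i+1} e_0$, and the Krylov module $\cK(A,\mathbf{r}_0,k-1)$ is spanned over $\cR$ by $Ae_0, A^2 e_0, \ldots, A^k e_0$. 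Throughout, positive-definiteness and symmetry of $A$ are in force (as the whole CGM framework requires), and the definition $\kappa = \overline{\lambda}/\underline{\lambda}$ presupposes $\underline{\lambda} > 0$, which is where positive-definiteness and finiteness of $\mu(X)$ enter.

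Next I would pass to the polynomial description of the search space. Any $\bx \in \bx_0 + \cK(A,\mathbf{r}_0,k-1)$ has the form $\bx = \bx_0 + \sum_{i=0}^{k-1} c_i A^i \mathbf{r}_0$ with $c_i \in \cR$, so that $\bx^* - \bx = e_0 - \sum_{i=0}^{k-1} c_i A^{i+1} e_0 = p(A) e_0$ where $p(T) = 1 - \sum_{i=0}^{k-1} c_i T^{i+1} \in \cR[T]_k^1$. Conversely, every $p \in \cR[T]_k^1$, and in particular every real $p \in \RR[T]_k^1$, arises from a genuine $\bx$ in this affine Krylov space by reading off the $c_i$ from the coefficients of $p$.

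Now, for $k < \ell$ (the terminal step; for $k \ge \ell$ the left side already vanishes), Proposition \ref{pro:cge} supplies, for each such $\bx$, the order inequality $\|\bx^* - \bx_k\|_A \le \|\bx^* - \bx\|_A = \|p(A) e_0\|_A$ in $\cR$, and Proposition \ref{pro:mq}, applied on the spectral interval $[\underline{\lambda}, \overline{\lambda}]$ (legitimate because positive-definiteness furnishes the eigenvector basis used in its proof), gives $\|p(A) e_0\|_A \le M^A(p)\,\|e_0\|_A$. At this point I would not attempt an infimum over $\cR$-valued norms; instead I would simply choose $p = \Ch$, the scaled Chebyshev polynomial with $a = \underline{\lambda}$ and $b = \overline{\lambda}$. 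Since $\Ch$ has real coefficients, $M^A(\Ch) = m(\Ch) = \sup_{\underline{\lambda} \le t \le \overline{\lambda}} |\Ch(t)|$ is the scalar already bounded before the theorem by $2\bigl((\sqrt{\kappa}-1)/(\sqrt{\kappa}+1)\bigr)^k$. Chaining the three inequalities then yields $\|\bx^* - \bx_k\|_A \le 2\bigl((\sqrt{\kappa}-1)/(\sqrt{\kappa}+1)\bigr)^k \|\bx^* - \bx_0\|_A$ in $\cR$, as required.

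The main obstacle is conceptual rather than computational: one must verify that the ``best approximation'' property is used only as the element-wise bound $\|\bx^* - \bx_k\|_A \le \|\bx^* - \bx\|_A$, which is exactly what the Pythagorean identity in the proof of Proposition \ref{pro:cge} delivers, so that no order-completeness of $\cR$ (which is merely a lattice) is invoked. Substituting the explicit real Chebyshev polynomial is precisely the device that keeps the whole argument inside the lattice structure actually available, avoiding any appeal to suprema or infima of infinite families of $\cR$-valued quantities.
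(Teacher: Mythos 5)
Your proposal is correct and follows essentially the same route as the paper: reduce to the polynomial form $\bx^*-\bx = q(A)(\bx^*-\bx_0)$ with $q\in\cR[T]_k^1$, apply Propositions \ref{pro:cge} and \ref{pro:mq}, and invoke the Chebyshev bound on $[\underline{\lambda},\overline{\lambda}]$. Your one deviation --- substituting the explicit real polynomial $\Ch$ instead of writing $\inf_{q\in\cR[T]_k^1}\|q(A)(\bx^*-\bx_0)\|_A$ as the paper does --- is a minor but genuine tightening, since it sidesteps the question of whether such an infimum of $\cR$-valued quantities exists in a lattice that is not assumed order-complete.
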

\begin{proof} 
From theorem \ref{pro:cgp}(4)we know
 that for any $\bx\in \bx_0+\cK(A,\mathbf{r}_0,k-1)$
there exists a polynomial $p_k(T)$ in $\cR[T]$ of degree $k-1$ such that
$ \bx=\bx_0+p_k(A)\mathbf{r}_0$.
Recall that $\mathbf{r}_0=  \mathbf{b} - A \bx_0$.  
Hence
$$
\bx^*-\bx = A^{-1}\mathbf{b}-\bx_0-p_k(A)\mathbf{r}_0 
 = q_k(A)A^{-1}\mathbf{r}_0=q_k(A)(\bx^*-\bx_0),
$$
where $q_k( T)=1-T p_k(T)$ is a polynomial of degree $k$ and $q_k(0)=1$.

Applying proposition \ref{pro:cge}
we get
$$\| \bx^*-\bx_k\|_A
\le   \inf \{\| \bx^*-\bx\|_A: 
\bx\in \bx_0+\cK(A,\mathbf{r}_0,k-1)\}
\le \inf\limits_{q\in \cR[T]_k^1}\| q(A)(\bx^*-\bx_0)\|_A.$$
Using  \ref{pro:mq} we have 
$$\| \bx^*-\bx_k\|_A \le 
\inf\limits_{q\in \cR[T]_k^1}
M^A(q) \| \bx^*-\bx_0\|_A
\le   2\left(\frac{\sqrt{\kappa}-1}{\sqrt{\kappa}+1}\right)^{k}
\| \bx^*-\bx_0\|_A.$$
\end{proof}
\noindent This is the same estimate as in the numerical case as given in  \cite{Axe 94} \S 13.2.1.

\section{Conclusions}

We have seen to what extend CGM can be used to solve a 
large linear system over 
the algebra of measurable functions on a measure space. 
The aim is to try to find a function which is a solution of the system rather than just doing a point-wise computation and getting only the values of the solution function at a few selected points.  
In the process we see that we need the theory of quadratic forms over rings and  an order structure on the ring of measurable functions for estimates. The result is a Riesz algebra. 
It is clear that much can be done about computational linear algebra over  a Riesz algebra - for example we can develop preconditioning methods for Wiener-Hopf integral equations in this context.


\begin{thebibliography}{abc 999}
  
  \bibitem[Axe 94]{Axe 94} O. Axelsson, Iterative solution methods, Cambridge University Press, (1994).

  \bibitem[AGZ 09]{AGZ 09} G. Anderson, A. Guionnet and O. Zeitouni, An introduction to random matrices, Cambridge University Press, (2009).

\bibitem[Bae 78]{Bae 78} R. Baeza, Quadratic forms over semilocal rings, Springer Lect. Notes in Math. 655 (1978).

\bibitem[Bru 79]{Bru 79} G. Brumfiel, Partially ordered rings and semi-algebraic geometry, Cambridge University Press, (1979).

\bibitem[CX 07]{CX 07} R. Chan, X. Jin, An Introduction to Iterative Toeplitz Solvers, SIAM Series on Fundamentals of Algorithms, Philadelphia (2007).

\bibitem[Fre 74]{Fre 74}D. Fremlin, Topological Riesz space and measure theory, Cambridge University Press, (1974).

\bibitem[Ng 04]{Ng 04} M. Ng, Iterative Methods for Toeplitz Systems, Oxford University Press (2004).

\bibitem[HS 52]{HS 52} M. Hestens, E. Stiefel, \emph{Methods of conjugate gradients for solving linear systems},
J. Res. Nat. Bur. Standards, \textbf{49}, (1952) 409-436.

\bibitem[HS 65]{HS 65} E. Hewitt, K. Stromberg,  Real and Abstract Analysis, Springer Verlag (1965).

\bibitem[Jac 85]{Jac 85} N. Jacobson, Basic Algebra, I, II, Freeman and Company, (1985)

\bibitem[KLW 13]{KLW 13} W. Ke, K. Lai, N. Wong, \emph{Preconditioned Random Toeplitz Operators},  arXiv:1308.4018, (2013).

\bibitem[Knu 91]{Knu 91} M. Knus, Quadratic and hermitian forms over rings, Springer, Berlin, (1991).

\bibitem[Riv 90]{Riv 90} T. Rivlin, Chebyshev Polynomials, Wiley-Interscience, (1990).

\bibitem[Ste 10]{Ste 10} S. Steinberg,
Lattice-ordered rings and modules, Springer (2010).
\end{thebibliography}
\end{document}